\def\cG{\mathcal{G}}
\def\bx{\mathbf{x}}
\def\cL{\mathcal{L}}
\newcommand{\beq}{\begin{equation}}
\newcommand{\eeq}{\end{equation}}
\newcommand{\bea}{\begin{array}}
\newcommand{\eea}{\end{array}}
\newtheorem{scheme}{Scheme}
\begin{document}
\title{A Remark on the Invariant Energy Quadratization (IEQ) Method for Preserving the Original Energy Dissipation Laws}
\author[Zengyan Zhang, Yuezheng Gong and J. Zhao]{
Zengyan Zhang\affil{1} and Yuezheng Gong \affil{2,3} and 
Jia Zhao\affil{1}\comma \corrauth}
\address{
\affilnum{1}\ Department of Mathematics \& Statistics, Utah State University, Logan, UT, 84322, USA \\
\affilnum{2}\ Department of Mathematics, Nanjing University of Aeronautics and Astronautics, Nanjing 211106, China \\
\affilnum{3}\ Key Laboratory of Mathematical Modelling and High Performance Computing of Air Vehicles (NUAA), MIIT, Nanjing  211106, China
}
\email{ {\tt jia.zhao@usu.edu.} (J.~Zhao) \\ This is an invited manuscript for the Special Issue "Computational modeling and numerical analysis of the complex interfacial problems" in  journal Electronic Research Archive (ERA).}

\begin{abstract}
In this letter, we revisit the IEQ method and provide a new perspective on its ability to preserve the original energy dissipation laws. The invariant energy quadratization (IEQ) method has been widely used to design energy stable numerical schemes for phase-field or gradient flow models. Although there are many merits of the IEQ method, one major disadvantage is that the IEQ method usually respects a modified energy law, where the modified energy is expressed in the auxiliary variables. Still, the dissipation laws in terms of the original energy are not guaranteed. Using the widely-used Cahn-Hilliard equation as an example, we demonstrate that the Runge-Kutta IEQ method indeed can preserve the original energy dissipation laws for certain situations up to arbitrary high-order accuracy. Interested readers are highly encouraged to apply our idea to other phase-field equations or gradient flow models.
\end{abstract}

\ams{}
\keywords{Energy Stable; Cahn Hilliard Equation; Invariant Energy Quadratization (IEQ) Method}
\maketitle

\section{Introduction}
A wide variety of interfacial phenomena \cite{steinbach2009,Anderson1998,Elder2004,Guo2021,Liu2007} are driven by a certain dissipative mechanism \cite{onsager1931-1,onsager1931-2,yang2016}. As a powerful approach to describe the dissipative mechanism, a gradient flow model which respect the thermodynamical laws is commonly used. In general, consider a domain $\Omega$, and denote the state variable as $\phi$, the dissipative dynamics of $\phi$ takes the form of 
\begin{equation}\label{model}
\partial_t\phi({\bf x},t)=-\mathcal{G}\frac{\delta E}{\delta \phi}~~~ \text{in}~\Omega\times(0,T],
\end{equation}
where $E$ is a functional of $\phi$ known as the free energy, and $\mathcal{G}$ is a semi-positive operator, known as the mobility operator. The triplet $(\phi,\mathcal{G},E)$ uniquely determines the thermodynamically consistent gradient flow model. With proper initial values and boundary conditions, the dynamics of the gradient flow model \eqref{model} satisfies the following energy dissipation law,
\begin{equation}\label{law}
\frac{dE}{dt}=\Big( \frac{\delta E}{\delta \phi} , \frac{\partial \phi}{\partial t} \Big) =  -(\mathcal{G}\frac{\delta E}{\delta\phi},\frac{\delta E}{\delta\phi})\leq 0,
\end{equation}
where the inner product is defined by $\displaystyle(f,g)=\int_\Omega fg d\bx$, $\forall f,g\in L^2(\Omega)$.

Due to its broad applications in the literature, many approaches \cite{eyre1998,wang2010,guillen2013,shin2017,ju2018,shen2018, Zhao-2017-JCP} are proposed to develop numerical approximations such that the energy dissipation property \eqref{law} is able to be preserved in the discrete level. Among the existing numerical approaches, the invariant energy quadratization (IEQ) method \cite{Zhao-2017-JCP} has been extensively used to design numerical algorithms for a broad class of phase field models \cite{Zhao-2021-CICP-Chen}. Meanwhile, by combing it with the Runge-Kutta method, arbitrarily high-order schemes could be developed \cite{Zhao-2019-AML-Gong}. 

However, there is still one limitation of the IEQ method that has not been adequately addressed in the numerical analysis community. The numerical schemes based on the IEQ method mainly respect a modified energy law, where the modified energy is expressed with auxiliary variables. The original energy and the modified energy are equivalent with respect to the analytical solutions. But they are not necessarily equal with respect to the numerical solutions.  Whether the numerical schemes based on the IEQ method respect the original energy law is still unknown.

To remedy this issue, we introduced a relaxation technique in our early work \cite{ZhaoAML2021, JiangJCP2021} for solving phase-field equations with the IEQ method. By adding a relaxation parameter, we numerically penalize the difference between the modified energy and the original energy so that the numerical solutions of the phase-field equations will follow more closely to the original energy dissipation law. Nevertheless, this still leaves a gap to rigorously prove whether the numerical schemes based on the IEQ method respect the original energy law or not.

Inspired by some recent advances on designing high-order structure-preserving schemes  for Hamiltonian systems \cite{Gong2021, Tapley2021}, we revisit our early work on developing arbitrarily high-order numerical schemes for dissipative systems \cite{Zhao-2020-SISC, Zhao-2020-CPC, Zhao-2019-AML-Gong, Zhao-2020-JCP-2}. Eventually, we come up with a new perspective on the IEQ method. We conclude that specific Runge-Kutta-type numerical schemes derived by the IEQ method indeed can preserve the original energy dissipation law for certain situations up to arbitrary high-order accuracy.


\section{The IEQ method for the Cahn-Hilliard equation}
To better explain this new perspective, we restrict our presentation on the widely used Cahn-Hilliard equation in this paper.  Recall the widely used Cahn-Hilliard equation with a periodic boundary condition
\beq \label{eq:CH}
\begin{aligned}
&\partial_t \phi = M \Delta \mu, \\
&\mu = -\varepsilon \Delta \phi + \frac{1}{\varepsilon} (\phi^3 - \phi).
\end{aligned}
\eeq 
which can be written in an energy variation form of \eqref{model} with $\mathcal{G}=-M\Delta$.

This model has an energy dissipation law
\beq \label{eq:CH-energy-law}
\frac{d}{dt} E(\phi) = (M\Delta\mu,\mu)=-\int_\Omega M |\nabla \mu|^2 d\bx\leq0,
\eeq 
with the free energy $E(\phi)$ defined as
\beq \label{eq:original-energy}
E(\phi) =  \int_\Omega  \Big[ \frac{\varepsilon}{2}|\nabla \phi|^2 + \frac{1}{4\varepsilon}(\phi^2-1)^2 \Big] d\bx.
\eeq 

An energy stable scheme means the numerical solutions from the scheme will also respect the energy dissipation law of  \eqref{eq:CH-energy-law} in the discrete level. The IEQ method  \cite{Zhao-2017-JCP} is shown to be effective in guiding the design of energy stable schemes. The idea of the IEQ method is to reformulate the original PDE in \eqref{eq:CH} into an equivalent form, for which the energy stable schemes can be effectively designed. 

Specifically, we can introduce an auxiliary function 
\beq
q(\bx, t) := \phi^2 - 1 - C,
\eeq 
where $C$ is a constant (to be specified by the users) \cite{Zhao-2018-ANM}. With the auxiliary function $q(\bx, t)$, we can reformulate the CH equation \eqref{eq:CH} as 
\beq \label{eq:CH-EQ}
\begin{aligned}
& \partial_t \phi = M \Delta \mu, \\
& \mu  = - \varepsilon \Delta \phi + \frac{1}{\varepsilon} \phi (q + C), \\
& \partial_t q = 2\phi \partial_t \phi, \quad q(\bx, 0) = \phi^2(\bx, 0) - 1- C.
\end{aligned}
\eeq 
Here $q(\bx, 0) = \phi^2(\bx, 0) - 1- C$  is the consistent initial condition.
The reformulated model \eqref{eq:CH-EQ} satisfies a modified energy law
\beq \label{eq:CH-EQ-energy-law}
\frac{d }{dt}F(\phi, q) =- \int_\Omega M \Big|\nabla ( - \varepsilon \Delta \phi + \frac{1}{\varepsilon} \phi(q + C)) \Big|^2 d\bx\leq0,
\eeq 
with the modified energy $F(\phi,q)$ given as
\beq \label{eq:EQ-energy}
F(\phi, q) = \int_\Omega \Big[\frac{\varepsilon}{2}|\nabla \phi|^2 + \frac{C}{2\varepsilon}\phi^2 + \frac{1}{4\varepsilon}  \Big( q^2 - C^2 - 2C \Big) \Big] d\bx.
\eeq 

\begin{lemma}
\eqref{eq:CH} and \eqref{eq:CH-EQ} are equivalent.
\end{lemma}

This lemma can be easily shown. So the details are omitted.  Notice the fact that we can get 
\beq  \label{eq:consistent-condition}
q(\bx, t) - \Big[  (\phi(\bx,t))^2 -1 -C \Big] = 0
\eeq 
from \eqref{eq:CH-EQ}. Hence the original energy \eqref{eq:original-energy} and the modified energy \eqref{eq:EQ-energy} are equivalent, and the original energy law \eqref{eq:CH-energy-law} and the modified energy law \eqref{eq:CH-EQ-energy-law} are equivalent as well.
However, we emphasize that the consistent condition in \eqref{eq:consistent-condition} might not be satisfied numerically.

\section{Arbitrarily high-order numerical schemes}
To solve \eqref{eq:CH-EQ}, we revisit the IEQ-RK schemes in our previous work \cite{Zhao-2019-AML-Gong}. Consider the time domain $ t \in [0, T]$. We discretize it into equally distanced meshes, $0=t_0<t_1< \cdots < t_N=T$, with $t_i = i\Delta t$ and $\Delta t = \frac{T}{N}$. And we use $\phi^{n+1}$ to represent the numerical solutions of $\phi(\bx, t)$ at $t_{n+1}$. Similar notations apply to other variables as well.

\begin{scheme}[$s$-stage Runge-Kutta EQ scheme] \label{scheme1}
Let $a_{ij}$ and $b_i$ with $i,j=1,2,\cdots,s$ be real numbers (the Runge-Kutta coefficients). Use the the consistent initial condition $q^0 = (\phi^0)^2 - 1 -C$. Given $(\phi^n, q^n)$, we can calculate $(\phi^{n+1}, q^{n+1})$ through the following Runge-Kutta (RK) numerical scheme
\begin{eqnarray}
& \phi^{n+1} = \phi^n + \Delta t \displaystyle\sum_{i=1}^s b_i k_i^n, \label{scheme:phi}  \\
& q^{n+1} = q^n + \Delta t \displaystyle\sum_{i=1}^s b_i l_i^n, \label{scheme:q}
\end{eqnarray}
where the intermediate terms are calculated from
\begin{equation}
\begin{aligned}
& \phi_i^n = \phi^n + \Delta t \sum_{j=1}^s a_{ij} k_j^n, \\
& q_i^n = q^n + \Delta t \sum_{j=1}^s a_{ij} l_j^n, \\
& k_i^n = M\Delta\Big(- \varepsilon \Delta \phi_i^n + \frac{1}{\varepsilon} \phi_i^n (q_i^n + C)\Big), \\
& l_i^n = 2 \phi_i^n k_i^n, 
\end{aligned}
\end{equation}
with $i=1,2,\cdots,s$.
\end{scheme}

For simplicity of notations, we summarized the $s$-stage RK coefficients in the Butcher table form
$$
\begin{array}
{c|c}
\mathbf{c} & \mathbf{A} \\
\hline
&  \mathbf{b}^T\\
\end{array}
=
\begin{array}
{c|ccc}
c_1 & a_{11} & \cdots & a_{1s}  \\
\vdots & \vdots  & & \vdots \\
c_s & a_{s1} & \cdots & a_{ss} \\
\hline
& b_1 &  \cdots & b_s \\
\end{array},
$$
where $\mathbf{A} \in \mathbb{R}^{s,s}$, $\mathbf{b} \in \mathbb{R}^s$, and $\mathbf{c}=\mathbf{A} \mathbf{l}$ with $\mathbf{l}=(1,1,\cdots,1)^T \in \mathbb{R}^s$.

\begin{definition}[Symplectic Condition]
Define a symmetric matrix $\mathbf{S} \in \mathbb{R}^{s,s}$ as
$$
S_{ij} = b_i a_{ij} + b_j a_{ji} - b_i b_j, \quad i, j=1,2,\cdots s.
$$
The symplectic condition is defined as
\beq \label{eq:symplectic}
S_{ij} = 0, \quad  b_i \geq 0, \quad i, j=1,2,\cdots s.
\eeq 
\end{definition}

It is known from \cite{HairerBook} that the Gaussian collocation methods satisfy the symplectic condition in \eqref{eq:symplectic}. The RK coefficients based on the 2nd, 4th, and 6th order Gaussian collocation methods are summarized in Table \ref{tab-Guassian}.

\begin{table}[H]
\centering
\caption{Butcher tableaus of Gauss methods of 2, 4, and 6.}\label{tab-Guassian}

\begin{tabular}{ccc}
\hline
\begin{tabular}{c|c}
$\frac{1}{2}$ & $\frac{1}{2}$ \\
\hline
& 1 \\
\end{tabular}
&\begin{tabular}{c|cc}
$\frac{1}{2} - \frac{\sqrt{3}}{6}$ & $\frac{1}{4}$ & $\frac{1}{4} - \frac{ \sqrt{3}}{6}$  \\
$\frac{1}{2} + \frac{\sqrt{3}}{6}$ & $\frac{1}{4} + \frac{\sqrt{3}}{6}$  & $\frac{1}{4}$ \\
\hline
& $\frac{1}{2}$ & $\frac{1}{2}$ \\
\end{tabular}
&\begin{tabular}{c|ccc}
$\frac{1}{2} - \frac{\sqrt{15}}{10}$ & $\frac{5}{36}$ & $\frac{2}{9} - \frac{\sqrt{15}}{15}$ &  $\frac{5}{36} - \frac{\sqrt{15}}{30}$  \\
$\frac{1}{2}$ & $\frac{5}{36} + \frac{\sqrt{15}}{24}$ & $\frac{2}{9}$  & $\frac{5}{36} - \frac{\sqrt{15}}{24}$ \\
$\frac{1}{2} + \frac{\sqrt{15}}{10}$ & $\frac{5}{36} + \frac{\sqrt{15}}{30}$ & $ \frac{2}{9} + \frac{ \sqrt{15}}{15}$ & $\frac{5}{36}$  \\
\hline
& $ \frac{5}{18}$ & $\frac{4}{9}$ & $\frac{5}{18}$ \\
\end{tabular}\\
\hline
\end{tabular}
\end{table}
In other words, the set of RK coefficients that satisfy the symplectic condition in \eqref{eq:symplectic} is not empty. With these preparations, we are ready to present the main theorem in this letter.

\begin{theorem} \label{Theorem1}
Assume that the RK coefficients $a_{ij}, b_i$ satisfy the symplectic condition in \eqref{eq:symplectic}. Scheme \ref{scheme1} obeys the following energy dissipation law
\beq
E(\phi^{n+1}) - E(\phi^n)  = - \Delta t \sum_{i=1}^s b_i \Big\|\sqrt{M} \nabla \Big( -\varepsilon\Delta 
\phi_i^n+ \frac{1}{\varepsilon}\phi_i^n (q_i^n + C )\Big) \Big\|^2 \leq 0,
\eeq 
with $E(\phi)$ defined in \eqref{eq:original-energy}. That is the numerical solutions from Scheme \ref{scheme1} respect the original energy dissipation laws.
\end{theorem}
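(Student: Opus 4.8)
The plan is to couple the original energy $E$ and the modified energy $F$ at the \emph{discrete} level, and then transport the modified-energy dissipation law to the original one. Everything rests on two separate algebraic consequences of the symplectic condition \eqref{eq:symplectic}, each exploiting a quadratic structure in the pair $y:=(\phi,q)$. Throughout I write $g_i=(k_i^n,l_i^n)$, $Y_i=(\phi_i^n,q_i^n)$, and let $(\cdot,\cdot)$ denote the natural inner product on the product space (the sum of the two $L^2$ inner products).

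First I would show that Scheme \ref{scheme1} exactly preserves the consistency relation \eqref{eq:consistent-condition}, i.e. $q^n=(\phi^n)^2-1-C$ for every $n$, by induction. The base case is the consistent initial condition $q^0=(\phi^0)^2-1-C$. For the inductive step I would form $q^{n+1}-(\phi^{n+1})^2$ directly from \eqref{scheme:phi}--\eqref{scheme:q}, substitute $l_i^n=2\phi_i^n k_i^n$ and $\phi_i^n-\phi^n=\Delta t\sum_j a_{ij}k_j^n$, and collect the $O(\Delta t^2)$ terms. The cross terms organize, after symmetrizing the pointwise product $k_i^n k_j^n$, into $\Delta t^2\sum_{i,j}(b_i a_{ij}+b_j a_{ji}-b_i b_j)k_i^n k_j^n=\Delta t^2\sum_{i,j}S_{ij}\,k_i^n k_j^n$, which vanishes since $S_{ij}=0$. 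Hence $q^{n+1}-(\phi^{n+1})^2=q^n-(\phi^n)^2=-1-C$. Once this holds, substituting $q^n=(\phi^n)^2-1-C$ into \eqref{eq:EQ-energy} collapses $F(\phi^n,q^n)$ to $E(\phi^n)$, so that $E(\phi^{n+1})-E(\phi^n)=F(\phi^{n+1},q^{n+1})-F(\phi^n,q^n)$ on the numerical trajectory.

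Second, I would compute the modified-energy increment. The crucial observation is that, up to an additive constant, $F$ in \eqref{eq:EQ-energy} is a \emph{quadratic} functional of $y$: writing its variational derivative as the linear self-adjoint operator $\mathcal{A}y=\bigl(-\varepsilon\Delta\phi+\tfrac{C}{\varepsilon}\phi,\ \tfrac{1}{2\varepsilon}q\bigr)$, so that $F(y)=\tfrac12(y,\mathcal{A}y)+\mathrm{const}$, I would use the exact quadratic identity $F(y^{n+1})-F(y^n)=(\mathcal{A}y^n,\delta)+\tfrac12(\delta,\mathcal{A}\delta)$ with $\delta=\Delta t\sum_i b_i g_i$. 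Rewriting $\mathcal{A}y^n=\mathcal{A}Y_i-\Delta t\sum_j a_{ij}\mathcal{A}g_j$ at each stage and using that the matrix $(\mathcal{A}g_j,g_i)$ is symmetric in $(i,j)$, the $O(\Delta t^2)$ remainder again condenses into $\tfrac{\Delta t^2}{2}\sum_{i,j}S_{ij}(\mathcal{A}g_j,g_i)$ and drops out by \eqref{eq:symplectic}. What survives is $F(y^{n+1})-F(y^n)=\Delta t\sum_i b_i(\mathcal{A}Y_i,g_i)$. A short computation using $l_i^n=2\phi_i^n k_i^n$ gives $(\mathcal{A}Y_i,g_i)=(\mu_i^n,k_i^n)$ with $\mu_i^n=-\varepsilon\Delta\phi_i^n+\tfrac1\varepsilon\phi_i^n(q_i^n+C)$; then $k_i^n=M\Delta\mu_i^n$ together with integration by parts under the periodic boundary condition yields $(\mu_i^n,k_i^n)=-\|\sqrt{M}\nabla\mu_i^n\|^2$. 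Combined with the first part this produces the stated identity, and $b_i\ge 0$ renders the right-hand side nonpositive.

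The main obstacle is really a single conceptual point: recognizing that the \emph{same} cancellation forced by $S_{ij}=0$ does double duty, simultaneously preserving the quadratic constraint $q=\phi^2-1-C$ (so that $E$ and $F$ agree on the numerical solution) and annihilating the spurious $O(\Delta t^2)$ terms in the quadratic energy increment. The routine but delicate part is the stage bookkeeping---the substitutions $\phi_i^n-\phi^n=\Delta t\sum_j a_{ij}k_j^n$ and $\mathcal{A}y^n=\mathcal{A}Y_i-\Delta t\sum_j a_{ij}\mathcal{A}g_j$---together with the symmetrization $2\sum_{i,j}b_i a_{ij}(\cdot)=\sum_{i,j}(b_i a_{ij}+b_j a_{ji})(\cdot)$, which requires the paired factor to be symmetric in $(i,j)$: trivial for the pointwise product $k_i^n k_j^n$, and for the bilinear form $(\mathcal{A}g_j,g_i)$ a consequence of the self-adjointness of $-\Delta$ and of the multiplication operators under the periodic boundary condition.
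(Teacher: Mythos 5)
Your proposal is correct and follows essentially the same route as the paper: first the symplectic condition is used to show the scheme preserves the consistency relation $q^n=(\phi^n)^2-1-C$ (the paper's Lemma \ref{Lemma:1}), so that $E$ and $F$ agree on the numerical trajectory, and then the same symplectic cancellation kills the $O(\Delta t^2)$ terms in the increment of the quadratic modified energy, leaving $\Delta t\sum_i b_i(\mu_i^n,k_i^n)=-\Delta t\sum_i b_i\|\sqrt{M}\nabla\mu_i^n\|^2\le 0$. The only cosmetic difference is that you package the $\phi$- and $q$-contributions into a single self-adjoint operator $\mathcal{A}$ on the pair $(\phi,q)$, whereas the paper treats $\frac12(\phi,\cL\phi)$ and $\frac{1}{4\varepsilon}(q,q)$ in two separate but identical computations.
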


\begin{lemma}   \label{Lemma:1}
Assume that the RK coefficients $a_{ij}, b_i$ satisfy the symplectic condition in \eqref{eq:symplectic}. From Scheme \ref{scheme1}, we have
\beq
q^{n+1} =  (\phi^{n+1})^2 -1 -C.
\eeq 
\end{lemma}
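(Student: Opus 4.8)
The plan is to argue by induction on $n$, showing that the consistency relation $q^{n} = (\phi^{n})^2 - 1 - C$ propagates from one time step to the next. The base case $n=0$ is exactly the consistent initial condition $q^0 = (\phi^0)^2 - 1 - C$ prescribed in Scheme \ref{scheme1}. For the inductive step I would assume $q^n = (\phi^n)^2 - 1 - C$ and prove the equivalent statement that the pointwise quantity $(\phi^n)^2 - q^n$ is unchanged after one update, i.e. $(\phi^{n+1})^2 - q^{n+1} = (\phi^n)^2 - q^n$. Throughout, all identities are understood pointwise in $\bx$, so that products such as $k_i^n k_j^n$ are ordinary products of functions even though each $k_i^n$ is built from the spatial operator $M\Delta$; consequently no integration by parts or boundary terms enter and the periodic boundary conditions play no role here.

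Next I would expand both sides using the update formulas \eqref{scheme:phi} and \eqref{scheme:q}. Squaring $\phi^{n+1} = \phi^n + \Delta t \sum_i b_i k_i^n$ produces the cross term $2\Delta t\,\phi^n \sum_i b_i k_i^n$ and the quadratic term $(\Delta t)^2 (\sum_i b_i k_i^n)^2$, while $q^{n+1} = q^n + \Delta t \sum_i b_i l_i^n$ contributes $\Delta t \sum_i b_i l_i^n$. The essential structural feature is that $l_i^n = 2\phi_i^n k_i^n$ carries the stage value $\phi_i^n$ rather than $\phi^n$; substituting $\phi_i^n = \phi^n + \Delta t \sum_j a_{ij} k_j^n$ converts $\Delta t\sum_i b_i l_i^n$ into $2\Delta t\,\phi^n\sum_i b_i k_i^n + 2(\Delta t)^2 \sum_{i,j} b_i a_{ij} k_i^n k_j^n$. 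After cancelling the common cross term $2\Delta t\,\phi^n \sum_i b_i k_i^n$, the discrepancy $(\phi^{n+1})^2 - q^{n+1} - [(\phi^n)^2 - q^n]$ collapses to $(\Delta t)^2\big[(\sum_i b_i k_i^n)^2 - 2\sum_{i,j} b_i a_{ij} k_i^n k_j^n\big]$.

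The final step is to recognize this residual as a quadratic form governed exactly by the symplectic matrix $\mathbf{S}$. Since $k_i^n k_j^n$ is symmetric under $i \leftrightarrow j$, symmetrizing the index sum gives $2\sum_{i,j} b_i a_{ij} k_i^n k_j^n = \sum_{i,j}(b_i a_{ij} + b_j a_{ji}) k_i^n k_j^n$, and writing $(\sum_i b_i k_i^n)^2 = \sum_{i,j} b_i b_j k_i^n k_j^n$ lets me combine everything into $-(\Delta t)^2 \sum_{i,j}(b_i a_{ij} + b_j a_{ji} - b_i b_j) k_i^n k_j^n = -(\Delta t)^2 \sum_{i,j} S_{ij} k_i^n k_j^n$. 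The symplectic condition \eqref{eq:symplectic} forces $S_{ij} = 0$ for all $i,j$, so the residual vanishes identically and the induction closes.

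I expect the only genuinely delicate point to be the symmetrization that turns the asymmetric combination $b_i a_{ij}$ into the symmetric $S_{ij}$; this is precisely where the hypothesis $S_{ij}=0$, rather than any weaker one-sided condition, is indispensable, since the residual is a nonzero quadratic form in the $k_i^n$ unless the full symmetric matrix vanishes. Everything else is bookkeeping: the cancellation of the terms linear in $\Delta t$ is automatic once $\phi_i^n$ is expanded, so the whole argument is purely algebraic.
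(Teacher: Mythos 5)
Your proposal is correct and follows essentially the same route as the paper: expand $q^{n+1}-q^n$ using $l_i^n = 2\phi_i^n k_i^n$, substitute $\phi_i^n = \phi^n + \Delta t\sum_j a_{ij}k_j^n$, symmetrize the double sum so the symplectic condition $b_ia_{ij}+b_ja_{ji}=b_ib_j$ turns it into $(\sum_i b_i k_i^n)^2$, match this against the expansion of $(\phi^{n+1})^2-(\phi^n)^2$, and close by induction from the consistent initial datum $q^0=(\phi^0)^2-1-C$. The only cosmetic difference is that you phrase the step as conservation of the pointwise invariant $(\phi^n)^2-q^n$ with residual $-(\Delta t)^2\sum_{i,j}S_{ij}k_i^nk_j^n$, whereas the paper shows the two increments coincide directly; the algebra is identical.
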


\begin{proof}
We prove the Lemma \ref{Lemma:1} first.
First of all, we will show that 
$$q^{n+1} - q^n= (\phi^{n+1})^2  - (\phi^{n})^2.$$

From \eqref{scheme:q}, we have
\beq \label{eq:q1}
\begin{aligned}
q^{n+1} - q^n 
&= \Delta t \sum_{i=1}^s b_i l_i^n = 2\Delta t \sum_{i=1}^s b_i \phi_i^n k_i^n \\
& = 2\Delta t \sum_{i=1}^s b_i k_i^n \big(\phi^n+\Delta t\sum_{j=1}^s a_{ij} k_j^n\big) \\
& =  2\Delta t \sum_{i=1}^s b_i  k_i^n \phi^n+2 (\Delta t)^2 \sum_{i=1}^s \sum_{j=1}^s b_ia_{ij} k_i^n k_j^n \\
& = 2\Delta t \sum_{i=1}^s b_i  k_i^n \phi^n+ (\Delta t)^2 \sum_{i=1}^s \sum_{j=1}^s (b_i a_{ij} + b_j a_{ji}) k_i^n k_j^n \\
& = 2\Delta t \sum_{i=1}^s b_i  k_i^n \phi^n+ (\Delta t)^2 \sum_{i=1}^s \sum_{j=1}^s b_i b_j k_i^n k_j^n.
\end{aligned}
\eeq 
From \eqref{scheme:phi}, we have
\beq \label{eq:phi1}
\begin{aligned}
(\phi^{n+1})^2 - (\phi^n)^2 
& = \big(2 \phi^n + \Delta t \sum_{i=1}^s b_i k_i^n\big)(\Delta t \sum_{i=1}^s b_i k_i^n) \\
& = 2 \Delta t \sum_{i=1}^s b_i k_i^n \phi^n + (\Delta t)^2 \sum_{i=1}^s \sum_{j=1}^s b_i b_j k_i^n k_j^n.
\end{aligned}
\eeq 

Comparing the equations \eqref{eq:q1} and \eqref{eq:phi1}, we immediately find
\beq
q^{n+1} - q^n = (\phi^{n+1})^2 - (\phi^n)^2.
\eeq 

Also, notice the fact $q^0 = (\phi^0)^2 - 1- C$.  Therefore, by induction, we get
\beq
q^{n+1} =  (\phi^{n+1})^2 -1 -C.
\eeq 
\end{proof}

Now, we are ready to present the proof for Theorem \ref{Theorem1}.
\begin{proof}
Denote $\cL= -\varepsilon \Delta + \frac{1}{\varepsilon}C$. First, we will show that
\[E(\phi^{n+1})-E(\phi^n)=F(\phi^{n+1},q^{n+1})-F(\phi^n,q^n)\]
The original energy functional \eqref{eq:CH-energy-law} can be rewritten as the following quadratic form
\beq\label{qurad-energy}
E(\phi)=\frac{1}{2}(\phi,\cL\phi)-\frac{C}{2\varepsilon}(\phi,\phi)+\frac{1}{4\varepsilon}(\phi^2-1,\phi^2-1).
\eeq
Similarly, the modified energy functional \eqref{eq:EQ-energy} can be rewritten as follows
\beq\label{qurad-EQ-energy}
F(\phi,q)=\frac{1}{2}(\phi,\cL\phi)+\frac{1}{4\varepsilon}(q,q)-\frac{C^2+2C}{4\varepsilon}|\Omega|.
\eeq
From \eqref{qurad-energy}, \eqref{qurad-EQ-energy} and Lemma \ref{Lemma:1}, we have
\beq
\begin{aligned}
F(\phi^{n+1},q^{n+1})
& = \frac{1}{2}(\phi^{n+1},\cL\phi^{n+1})+\frac{1}{4\varepsilon}(q^{n+1},q^{n+1})-\frac{C^2+2C}{4\varepsilon}|\Omega|\\
& = \frac{1}{2}(\phi^{n+1},\cL\phi^{n+1})+\frac{1}{4\varepsilon}\Big(\big((\phi^{n+1})^2-1-C,(\phi^{n+1})^2-1-C\big)\Big)-\frac{C^2+2C}{4\varepsilon}|\Omega|\\
& = \frac{1}{2}(\phi^{n+1},\cL\phi^{n+1})+\frac{1}{4\varepsilon}\big((\phi^{n+1})^2-1,(\phi^{n+1})^2-1\big)-\frac{C}{2\varepsilon}(\phi^{n+1},\phi^{n+1})\\
& = E(\phi^{n+1}).
\end{aligned}
\eeq
Therefore, we can get
\beq
\begin{aligned}
E(\phi^{n+1})-E(\phi^n)
& = F(\phi^{n+1},q^{n+1})-F(\phi^n,q^n)\\
& = \frac{1}{2}(\phi^{n+1}, \cL \phi^{n+1})  + \frac{1}{4\varepsilon}(q^{n+1}, q^{n+1})   - \Big[ \frac{1}{2}(\phi^n, \cL\phi^n)  + \frac{1}{4\varepsilon}(q^n, q^n) \Big].
\end{aligned}
\eeq
Next, from \eqref{scheme:phi}, we have
\beq \label{eq:e-1}
\begin{aligned}
& \frac{1}{2}(\phi^{n+1}, \cL \phi^{n+1}) - \frac{1}{2}(\phi^n, \cL\phi^n)  \\
= &\frac{1}{2}(\phi^n+\Delta t\sum_{i=1}^s b_i k_i^n,\cL\phi^n+\Delta t\sum_{i=1}^s b_i \cL k_i^n)-\frac{1}{2}(\phi^n,\cL\phi^n)\\
= &   \Delta t \sum_{i=1}^s b_i (k_i^n, \cL \phi^n) + \frac{1}{2} (\Delta t)^2 \sum_{i=1}^s \sum_{j=1}^s b_i b_j (k_i^n, \cL k_j^n) \\
= & \Delta t\sum_{i=1}^s b_i(k_i^n, \cL\phi_i^n-\Delta t\sum_{j=1}^s a_{ij}\cL k_j^n)+ \frac{1}{2} (\Delta t)^2 \sum_{i=1}^s \sum_{j=1}^s b_i b_j (k_i^n, \cL k_j^n)\\
= & \Delta t \sum_{i=1}^s b_i (k_i^n , \cL \phi_i^n)-(\Delta t)^2\sum_{i=1}^s\sum_{j=1}^s b_i a_{ij} (k_i^n,\cL k_j^n)+ \frac{1}{2} (\Delta t)^2 \sum_{i=1}^s \sum_{j=1}^s b_i b_j (k_i^n, \cL k_j^n)\\ 
= & \Delta t \sum_{i=1}^s b_i (k_i^n , \cL \phi_i^n) - \frac{1}{2}(\Delta t)^2 \sum_{i=1}^s \sum_{j=1}^s \Big[ b_i a_{ij} + b_j a_{ji} - b_i b_j \Big] (k_i^n, \cL k_j^n) \\
= & \Delta t \sum_{i=1}^s b_i (k_i^n , \cL \phi_i^n).
\end{aligned}
\eeq 
Meanwhile, from \eqref{scheme:q}, we have
\beq \label{eq:e-2}
\begin{aligned}
& \frac{1}{2}(q^{n+1}, q^{n+1}) - \frac{1}{2}(q^n, q^n)  \\
= & \frac{1}{2}(q^n+\Delta t\sum_{i=1}^s b_i l_i^n,q^n+\Delta t\sum_{i=1}^s b_i l_i^n)-\frac{1}{2}(q^n,q^n)\\
= &   \Delta t \sum_{i=1}^s b_i (l_i^n, q^n) + \frac{1}{2} (\Delta t)^2 \sum_{i=1}^s \sum_{j=1}^s b_i b_j (l_i^n, l_j^n) \\
= &  \Delta t \sum_{i=1}^s b_i (l_i^n, q_i^n-\Delta t\sum_{j=1}^sa_{ij} l_j^n)+ \frac{1}{2} (\Delta t)^2 \sum_{i=1}^s \sum_{j=1}^s b_i b_j (l_i^n, l_j^n)\\
= & \Delta t \sum_{i=1}^s b_i (l_i^n , q_i^n) - \frac{1}{2}(\Delta t)^2 \sum_{i=1}^s \sum_{j=1}^s \Big[ b_i a_{ij} + b_j a_{ji} - b_i b_j \Big] (l_i^n, l_j^n) \\
= & \Delta t \sum_{i=1}^s b_i (l_i^n , q_i^n)= \Delta t \sum_{i=1}^s b_i (k_i^n, 2\phi_i^n q_i^n).
\end{aligned}
\eeq 

So, from \eqref{eq:e-1} and \eqref{eq:e-2}, we conclude
\beq
\begin{aligned}
& \frac{1}{2}(\phi^{n+1}, \cL \phi^{n+1})  + \frac{1}{4\varepsilon}(q^{n+1}, q^{n+1})   - \Big[ \frac{1}{2}(\phi^n, \cL\phi^n)  + \frac{1}{4\varepsilon}(q^n, q^n) \Big] \\
= & \Delta t \sum_{i=1}^s b_i (k_i^n , \cL \phi_i^n) +\frac{\Delta t}{2\varepsilon} \sum_{i=1}^s b_i (k_i^n, 2\phi_i^n q_i^n)\\
= & \Delta t \sum_{i=1}^s b_i(k_i^n, \cL \phi_i^n+\frac{1}{\varepsilon}\phi_i^n q_i^n)\\
= & \Delta t \sum_{i=1}^s b_i (M\Delta(\cL
\phi_i^n+\frac{1}{\varepsilon}\phi_i^n q_i^n) , \cL \phi_i^n+\frac{1}{\varepsilon}\phi_i^n q_i^n).
\end{aligned}
\eeq 

Noticing the semi-definite property of $\cG=-M\Delta$ and $b_i\geq 0$, $\forall i$, this finally leads to
\beq
\begin{aligned}
E(\phi^{n+1}) - E(\phi^n)  
&= \Delta t \sum_{i=1}^s b_i (M\Delta(\cL
\phi_i^n+\frac{1}{\varepsilon}\phi_i^n q_i^n) , \cL \phi_i^n+\frac{1}{\varepsilon}\phi_i^n q_i^n) \\
& = - \Delta t \sum_{i=1}^s b_i \Big\|\sqrt{M} \nabla \Big( -\varepsilon\Delta 
\phi_i^n+ \frac{1}{\varepsilon}\phi_i^n (q_i^n + C )\Big) \Big\|^2 \\
&\leq 0,
\end{aligned}
\eeq 
where $E(\phi)$ is defined in \eqref{eq:original-energy}.
\end{proof}

\section{Connections with the classical implicit schemes}
The previous section makes it clear that the IEQ method can be used to derive arbitrarily high-order accurate numerical schemes for the Cahn-Hilliard equation that preserve the original energy dissipation laws. 

In particular, when $s=1$, we have the RK coefficients in Table \ref{tab-Guassian}. Scheme \ref{scheme1} is reduced as
\beq \label{eq:2nd}
\begin{aligned}
& \phi^{n+1} = \phi^n + \Delta t k_1^n, \\
& q^{n+1} = q^n + \Delta t l_1^n, \\
&\phi_1^n = \phi^n + \frac{1}{2} \Delta t k_1^n, \\
&q_1^n = q^n + \frac{1}{2} \Delta t  l_1^n, \\
&k_1^n = M \Delta ( - \varepsilon \Delta \phi_1^n + \frac{1}{\varepsilon}\phi_1^n (q_1^n + C)), \\
&l_1^n = 2 \phi_1^n k_1^n,
\end{aligned}
\eeq 
with the consistent initial condition $q^0 = (\phi^0)^2 - 1 - C$.

\begin{theorem}
The numerical scheme in \eqref{eq:2nd} is equivalent to the implicit scheme
\beq \label{eq:implicit-2nd}
\begin{aligned}
& \frac{\phi^{n+1} - \phi^n}{\Delta t} = M \Delta \mu^{n+\frac{1}{2}}, \\
& \mu^{n+\frac{1}{2}} = - \varepsilon \Delta \frac{\phi^{n+1} + \phi^n}{2} + \frac{1}{2 \varepsilon}(\phi^{n+1} + \phi^n) \Big[ \frac{1}{2} ((\phi^{n+1})^2 + (\phi^n)^2)  -1 \Big].
\end{aligned}
\eeq 
\end{theorem}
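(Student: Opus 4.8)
The plan is to prove the equivalence by exhibiting an explicit correspondence between the solutions of the two schemes, with the decisive leverage coming from the exact discrete consistency relation of Lemma \ref{Lemma:1}. First I would exploit the single-stage structure of \eqref{eq:2nd}. Subtracting the stage equation $\phi_1^n = \phi^n + \frac{1}{2}\Delta t\,k_1^n$ from the update $\phi^{n+1} = \phi^n + \Delta t\,k_1^n$ gives at once $\phi_1^n = \frac{\phi^{n+1}+\phi^n}{2}$, and the identical manipulation on the $q$-lines yields $q_1^n = \frac{q^{n+1}+q^n}{2}$. Hence the unique internal stage is simply the midpoint in both variables. In particular the first line of \eqref{eq:2nd} reads $\frac{\phi^{n+1}-\phi^n}{\Delta t} = k_1^n = M\Delta\big(-\varepsilon\Delta\phi_1^n + \frac{1}{\varepsilon}\phi_1^n(q_1^n+C)\big)$, which already matches the first line of \eqref{eq:implicit-2nd} once we identify $\mu^{n+\frac{1}{2}}$ with $-\varepsilon\Delta\phi_1^n + \frac{1}{\varepsilon}\phi_1^n(q_1^n+C)$.

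The crux is then to simplify the nonlinear term in $\mu^{n+\frac{1}{2}}$. Here I would invoke Lemma \ref{Lemma:1}, which is applicable because the $s=1$ Gauss coefficients $a_{11}=\frac{1}{2},\,b_1=1$ satisfy the symplectic condition (indeed $S_{11}=2b_1a_{11}-b_1^2=0$ and $b_1\geq0$). The lemma holds at every time level, so $q^{n+1} = (\phi^{n+1})^2 - 1 - C$ and $q^n = (\phi^n)^2 - 1 - C$. Averaging these and adding $C$ produces exactly $q_1^n + C = \frac{1}{2}\big((\phi^{n+1})^2 + (\phi^n)^2\big) - 1$. Substituting this identity together with $\phi_1^n = \frac{\phi^{n+1}+\phi^n}{2}$ into $-\varepsilon\Delta\phi_1^n + \frac{1}{\varepsilon}\phi_1^n(q_1^n+C)$ reproduces the second line of \eqref{eq:implicit-2nd} verbatim, which settles the forward implication.

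For the converse I would begin with any solution sequence $\{\phi^n\}$ of \eqref{eq:implicit-2nd} and define the auxiliary quantities $q^n := (\phi^n)^2 - 1 - C$, $\phi_1^n := \frac{\phi^{n+1}+\phi^n}{2}$, $q_1^n := \frac{q^{n+1}+q^n}{2}$, $k_1^n := \frac{\phi^{n+1}-\phi^n}{\Delta t}$, and $l_1^n := \frac{q^{n+1}-q^n}{\Delta t}$, and then verify that these satisfy all six relations of \eqref{eq:2nd}. The two update lines and the two stage lines hold by construction, and the definition of $k_1^n$ coincides with the evolution line of \eqref{eq:implicit-2nd} after the same substitution as above; the only relation requiring a short check is $l_1^n = 2\phi_1^n k_1^n$, which follows from factoring $q^{n+1} - q^n = (\phi^{n+1})^2 - (\phi^n)^2 = (\phi^{n+1}+\phi^n)(\phi^{n+1}-\phi^n)$ and dividing by $\Delta t$.

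I expect the only conceptual subtlety to be the precise role of Lemma \ref{Lemma:1}. The nonlinearity appearing in \eqref{eq:implicit-2nd} is $\frac{1}{2}\big((\phi^{n+1})^2 + (\phi^n)^2\big)$, the average of the squares, rather than the square of the average $(\phi_1^n)^2$ that a naive midpoint discretization of $\phi^3-\phi$ would generate; these two are genuinely different. That the IEQ-RK scheme nonetheless produces the former is exactly the content of the exact discrete consistency $q^n = (\phi^n)^2 - 1 - C$, so the whole equivalence hinges on that lemma. Once it is in hand, the remaining steps are routine algebra and bookkeeping of the midpoint relations.
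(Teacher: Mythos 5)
Your argument is correct and follows essentially the same route as the paper: identify the single Gauss stage as the midpoint, $\phi_1^n = \tfrac{1}{2}(\phi^{n+1}+\phi^n)$ and $q_1^n = \tfrac{1}{2}(q^{n+1}+q^n)$, then invoke Lemma \ref{Lemma:1} (the exact discrete consistency $q^n=(\phi^n)^2-1-C$) to convert $q_1^n+C$ into $\tfrac{1}{2}\big((\phi^{n+1})^2+(\phi^n)^2\big)-1$. Your explicit verification of the converse direction (reconstructing $q^n$, $k_1^n$, $l_1^n$ from a solution of \eqref{eq:implicit-2nd}) is a small but worthwhile addition that the paper leaves implicit.
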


\begin{proof}
From \eqref{eq:2nd}, we have
\beq \label{eq:sch-part1}
\frac{\phi^{n+1} - \phi^n}{\Delta t} = k_1^n, 
\eeq 
so that 
$$\phi_1^n = \phi^n + \frac{1}{2}\Delta t \frac{\phi^{n+1} - \phi^n}{\Delta t} = \frac{1}{2} (\phi^{n+1} + \phi^n).$$
Similarly, we have $q_1^n = \frac{1}{2}(q^n + q^{n+1})$.

Based on Theorem \ref{Theorem1}, we have $q^{n+1} = (\phi^{n+1})^2 - 1 - C$. This leads to
\beq \label{eq:q1n}
q_1^n = \frac{1}{2}( (\phi^{n+1})^2 + (\phi^n)^2) - 1 -C.
\eeq 
Substituting \eqref{eq:q1n} back into the expression for $k_1^n$, we have
\beq \label{eq:sch-part2}
k_1^n = M \Delta \Big[  - \varepsilon \Delta \frac{\phi^{n+1} + \phi^n}{2} + \frac{1}{2\varepsilon} (\phi^{n+1} + \phi^n) ( \frac{1}{2}( (\phi^{n+1})^2 + (\phi^n)^2) - 1) \Big].
\eeq

Combing \eqref{eq:sch-part1} and \eqref{eq:sch-part2}, we arrive at the implicit scheme \eqref{eq:implicit-2nd}.
\end{proof}

\section{Concluding remarks}
In this letter, we have revisited the IEQ method for solving the Cahn-Hilliard equation. In particular, we point out that the EQ-RK schemes with specified Runge-Kutta coefficients can preserve the energy dissipation laws with respect to the original energy expression. This partially addresses the opening question of whether the numerical schemes based on the IEQ method respect the original energy laws. It also sheds light on further exploring the IEQ method for solving thermodynamically consistent models.

\section*{Acknowledgments}
Z. Zhang and J. Zhao would like to acknowledge the support from National Science Foundation, United States, with grant NSF-DMS-2111479.


\begin{thebibliography}{10}
	
	\bibitem{Anderson1998}
	D.M. Anderson, G.B. McFadden, and A.A. Wheeler.
	\newblock {A diffuse diffusion method in fluid mechanics}.
	\newblock {\em Annual Review of Fluid Mechanics}, 30:139--165, 1998.
	
	\bibitem{Zhao-2021-CICP-Chen}
	L.~Chen, Z.~Zhang, and J.~Zhao.
	\newblock Numerical approximations of phase field models using a general class
	of linear time-integration schemes.
	\newblock {\em Communications in Computational Physics}, 30(5):1290--1322,
	2021.
	
	\bibitem{Zhao-2018-ANM}
	L.~Chen, J.~Zhao, and X.~Yang.
	\newblock Regularized linear schemes for the molecular beam epitaxy model with
	slope selection.
	\newblock {\em Applied Numerical Mathematics}, 128:138--156, 2018.
	
	\bibitem{Gong2021}
	Y.~Chen, Y.~Gong, Q.~Hong, and C.~Wang.
	\newblock A novel class of energy-preserving runge-kutta methods for the
	{K}orteweg-de {V}ries equation.
	\newblock {\em arXiv}, page 2108.12097, 2021.
	
	\bibitem{Elder2004}
	K.~Elder and M.~Grant.
	\newblock {Modeling elastic and plastic deformations in nonequilibrium
		processing using phase field crystals}.
	\newblock {\em Physical review E}, 70(051605), 2004.
	
	\bibitem{eyre1998}
	D.J. Eyre.
	\newblock Unconditionally gradient stable time marching the cahn-hilliard
	equation.
	\newblock {\em MRS Online Proceedings Library (OPL)}, 529, 1998.
	
	\bibitem{Zhao-2019-AML-Gong}
	Y.~Gong and J.~Zhao.
	\newblock Energy-stable runge--kutta schemes for gradient flow models using the
	energy quadratization approach.
	\newblock {\em Applied Mathematics Letters}, 94:224--231, 2019.
	
	\bibitem{Zhao-2020-JCP-2}
	Y.~Gong, J.~Zhao, and Q.~Wang.
	\newblock Arbitrarily high-order linear energy stable schemes for gradient flow
	models.
	\newblock {\em Journal of Computational Physics}, 419:109610, 2020.
	
	\bibitem{Zhao-2020-CPC}
	Y.~Gong, J.~Zhao, and Q.~Wang.
	\newblock Arbitrarily high-order unconditionally energy stable sav schemes for
	gradient flow models.
	\newblock {\em Computer Physics Communications}, 249:107033, 2020.
	
	\bibitem{Zhao-2020-SISC}
	Y.~Gong, J.~Zhao, and Q.~Wang.
	\newblock Arbitrarily high-order unconditionally energy stable schemes for
	thermodynamically consistent gradient flow models.
	\newblock {\em SIAM Journal on Scientific Computing}, 42(1):B135--B156, 2020.
	
	\bibitem{guillen2013}
	F.~Guill{\'e}n-Gonz{\'a}lez and G.~Tierra.
	\newblock On linear schemes for a cahn--hilliard diffuse interface model.
	\newblock {\em Journal of Computational Physics}, 234:140--171, 2013.
	
	\bibitem{Guo2021}
	Z.~Guo, F.~Yu, P.~Lin, S.M. Wise, and J.~Lowengrub.
	\newblock {A diffuse domain method for two-phase flows with large density ratio
		in complex geometries}.
	\newblock {\em Journal of Fluid Mechanics}, 907(A38), 2021.
	
	\bibitem{HairerBook}
	E.~Hairer, C.~Lubich, and G.~Wanner.
	\newblock Geometric numerical integration: structure-preserving algorithms for
	ordinary differential equations.
	\newblock {\em Springer Science $\&$ Business Media}, 2006.
	
	\bibitem{JiangJCP2021}
	M.~Jiang, Z.~Zhang, and J.~Zhao.
	\newblock Improving the accuracy and consistency of the scalar auxiliary
	variable ({SAV}) method with relaxation.
	\newblock {\em arXiv}, page 2104.06620, 2021.
	
	\bibitem{ju2018}
	L.~Ju, X.~Li, Z.~Qiao, and H.~Zhang.
	\newblock Energy stability and error estimates of exponential time differencing
	schemes for the epitaxial growth model without slope selection.
	\newblock {\em Mathematics of Computation}, 87(312):1859--1885, 2018.
	
	\bibitem{Liu2007}
	C.~Liu, J.~Shen, and X.~Yang.
	\newblock { Dynamics of defect motion in nematic liquid crystal flow: modeling
		and numerical simulation}.
	\newblock {\em Communications in Computational Physics}, 2:1184--1198, 2007.
	
	\bibitem{onsager1931-1}
	L.~Onsager.
	\newblock Reciprocal relations in irreversible processes. i.
	\newblock {\em Physical review}, 37(4):405, 1931.
	
	\bibitem{onsager1931-2}
	L.~Onsager.
	\newblock Reciprocal relations in irreversible processes. ii.
	\newblock {\em Physical review}, 38(12):2265, 1931.
	
	\bibitem{shen2018}
	J.~Shen, J.~Xu, and J.~Yang.
	\newblock The scalar auxiliary variable (sav) approach for gradient flows.
	\newblock {\em Journal of Computational Physics}, 353:407--416, 2018.
	
	\bibitem{shin2017}
	J.~Shin, H.~Lee, and J.~Lee.
	\newblock Unconditionally stable methods for gradient flow using convex
	splitting runge--kutta scheme.
	\newblock {\em Journal of Computational Physics}, 347:367--381, 2017.
	
	\bibitem{steinbach2009}
	I.~Steinbach.
	\newblock Phase-field models in materials science.
	\newblock {\em Modelling and simulation in materials science and engineering},
	17(7):073001, 2009.
	
	\bibitem{Tapley2021}
	B.~K. Tapley.
	\newblock Numerical integration of {ODEs} while preserving all polynomial first
	integrals.
	\newblock {\em arXiv}, page 2108.06548, 2021.
	
	\bibitem{wang2010}
	C.~Wang, X.~Wang, and S.M. Wise.
	\newblock Unconditionally stable schemes for equations of thin film epitaxy.
	\newblock {\em Discrete \& Continuous Dynamical Systems}, 28(1):405, 2010.
	
	\bibitem{yang2016}
	X.~Yang, J.~Li, G.~Forest, and Q.~Wang.
	\newblock Hydrodynamic theories for flows of active liquid crystals and the
	generalized onsager principle.
	\newblock {\em Entropy}, 18(6):202, 2016.
	
	\bibitem{Zhao-2017-JCP}
	X.~Yang, J.~Zhao, and Q.~Wang.
	\newblock Numerical approximations for the molecular beam epitaxial growth
	model based on the invariant energy quadratization method.
	\newblock {\em Journal of Computational Physics}, 333:104--127, 2017.
	
	\bibitem{ZhaoAML2021}
	J.~Zhao.
	\newblock A revisit of the energy quadratization method with a relaxation
	technique.
	\newblock {\em Applied Mathematics Letters}, 120(107331), 2021.
	
\end{thebibliography}
\bibliographystyle{plain}

\end{document}